\newtheorem{theorem}{Theorem}
\newtheorem{assumption}{Assumption}
 \newenvironment{proof}{\emph{Proof:}}{\hspace{\stretch{1}}\rule{1ex}{1ex}}
\newtheorem{remark}{Remark}
\titleformat*{\section}{\normalsize\bf}
\titlespacing{\section}{0pt}{18pt}{6pt}
\titleformat*{\subsection}{\normalsize\it}
\titlespacing{\subsection}{0pt}{12pt}{6pt}
\providecommand{\keywords}[1]{\hspace{0.25in}\textit{Keywords}\\
\vspace{-20pt}
\begin{center}
\begin{minipage}{5.84in}
#1
\end{minipage}
\end{center}}
\title{\fontsize{14}{16.8}
\textbf{Optimal control of a linearized continuum model for re-entrant manufacturing production systems}
}
\author[1]{Xiaodong Xu}
\author[1]{Stevan Dubljevic \thanks{To whom all correspondence should be addressed \textit{Stevan.Dubljevic@ualberta.ca}}}
\affil[1]{Department of Chemical and Materials Engineering, University of Alberta,
Edmonton, Alberta, Canada}
\date{}
\begin{document}
\pagenumbering{gobble}
\twocolumn[
\begin{@twocolumnfalse}
\maketitle
\begin{abstract}
\vspace*{-10pt}
\noindent
A re-entrant manufacturing system producing a large number of items and involving many steps can be approximately modeled by a hyperbolic partial differential equation (PDE) according to mass conservation law with respect to a continuous density of items on a production process. The mathematic model is a typical nonlinear and nonlocal PDE and the cycle time depends nonlinearly on the work in progress. However, the nonlinearity brings mathematic and engineering difficulties in practical application. In this work, we address the optimal control based on the linearized system model and in order to improve the model and control accuracy, a modified system model taking into account the re-entrant degree of the product is utilized to reflect characteristics of small-scale and large-scale multiple re-entrant manufacturing systems. In this work, we solve the optimal output reference tracking problem through combination of variation approach and state feedback internal model control (IMC) method. Numerical example on optimal boundary influx for step-like demand rate is presented. In particular, the demand rates are generated by an known exosystem.
\end{abstract}
\keywords{Re-entrant manufacturing, Optimal control, Output regulation.}\vspace{12pt}
\end{@twocolumnfalse}
]
\saythanks
\section*{Introduction}
\noindent The re-entrant manufacturing system is one of the most
complex manufacturing processes, and it has the following characteristics:
there are a large number of loadings, large quantities of machines and production steps,
and a high degree of re-entry in the system. Several model methods have been provided in literatures for multiple re-entrant production flows: 
Petri-net, queuing network, fluid network, and partial differential equations (PDE). In this work, the model provided in \cite{armbruster2006continuum} is utilized and the expression of the velocity in the model is improved by \cite{dong2011optimal}. \\ 
\indent The output regulation problem or servo-problem is one classical and essential control problem. The problem is formulated as regulator design for the fixed plant such that the controlled output tracks a desired reference signal (and/or reject disturbance) generated by an exosystem. In order to generalize the well-developed theory of finite-dimensional systems to infinite-dimensional systems, significant efforts have been made: the geometric methods developed in \cite{francis1977linear} in finite-dimensional systems. Recently, these geometric methods were introduced for non-spectral hyperbolic systems in \cite{xu2015output} and \cite{Xu2016servo}. Moreover, along the same line, finite-dimensional output and error feedback regulators solving the regulation problem are introduced in \cite{Xu2015finites} and \cite{xu2016finite}. In particular, the boundary tracking actuation approach was developed in \cite{deutscher2015backstepping}. In this work, we are deriving finite time optimal control results for boundary controlled production system by using weak variations in \cite{athans1966optimal} and constructing a boundary optimal regulator to address the demand rate tracking problem. \\
\indent The remainder of this work is organized as follows: some notations used throughout this work are introduced in the next section. In the third section, the problem is stated and then the main results are demonstrated in the fourth and fifth section, where the boundary optimal controller and the boundary tracking regulator are designed. The proposed approach is verified through numerical examples in the sixth section.\vspace{-10mm}\\ 
\section*{Mathematical Preliminaries and Notation}
\noindent In order to demonstrate the work more clearly, we first introduce some preliminary mathematics. The following linear operator used in this work is defined by
\[P(f(z)): = \int_0^1 {P(z,y)f(y)} dy,z \in \left[ {0,1} \right]\]
The inner product in this work is denoted by
\[\left\langle {f(z),g(z)} \right\rangle  = \int_0^1 {f(z)g(z)} dz\]
Partial derivatives with respect to time and spatial variables $t$ and $z$ are presented by
${\partial _t} = \frac{\partial }{{\partial t}},{\partial _z} = \frac{\partial }{{\partial z}}$.\vspace{-5mm}
\section*{Problem Formulation}
Assuming the mass conservation law and scaling the spatial variable $z\in [0,1]$, the large-scale re-entrant manufacturing systems can be modelled by the continuity equation:
\begin{equation}\label{pl-1}
\partial_t \rho(z,t) + \partial_z \left( {v\left( {\rho (z,t)} \right)\rho (z,t)} \right) = 0
\end{equation}
where $\rho(z,t)$ describes the density of products at stage $z$ of production at a time $t$ and $v(\rho(z,t))$ is a velocity function depending on the density $\rho(z,t)$ only. The product rates $u(t)$ and $y(t)$ entering and existing the production system at $z=0$ and $z=1$ are defined as follows:
\begin{equation}\label{pl-2}
u(t) = {\left. {v\left( {\rho (z,t)} \right)\rho (z,t)} \right|_{z = 0}}
\end{equation}
\begin{equation}\label{pl-3}
y(t) = {\left. {v\left( {\rho (z,t)} \right)\rho (z,t)} \right|_{z = 1}}
\end{equation}
The total load in the production line $L(t)$ (work-in-progress (WIP)) is defined by $L(t) = \int_0^1 {\rho (z,t)dz}$ and the velocity $v(\rho(z,t))$ is chosen as the function of $L(t)$ to describe the equilibrium velocity of the factory. It should be noted that the velocity function is bounded, positive and monotonically decreasing. Obviously, we have the nonnegative influx $u(t)$ and nonnegative initial data $\rho(z,0)=\rho_0(z)$, the density will definitely remain nonnegative. Usually, the velocity can be described by
\begin{equation}\label{v-1}
v(\rho (z,t)) = {v_0}\left( {1 - \frac{{L(t)}}{{{L_{\max }}}}} \right)
\end{equation}
where $v_0$ denotes the empty system velocity and $L_{\max}$ is the maximal load of the factory. Naturally, the velocity is determined by the total WIP. The production process can be described as an equivalent M/M/1 queue. Let $\rho_{ss}$ denote the steady state density and then the mean cycle time in steady state is $\tau_{ss}=1/v_{ss}$. Since the steady state velocity is the same for all items in the queue, the time that an item spends in the machines without waiting is $1/v_{\max}$. Based on queuing theory, the cycle time $\tau=(1+L)/v_{\max}$ is obtained, the steady state velocity thus becomes:
\begin{equation}\label{v-2}
v(\rho (z,t)) = \frac{{{v_{\max }}}}{{1 + \int_0^1 {\rho (z,t)dz} }} = \frac{{{v_{\max }}}}{{1 + L(t)}}
\end{equation}
The expression in (\ref{v-2}) is widely utilized to describe the relation between $v$ and $\rho$ for large-scale multiple re-entrant systems. Obviously, due to the integration in $L(t)$, the velocity $v$ is only function of time $t$, equation (\ref{pl-1}) can be rewritten as:
\begin{equation}\label{newpl-1}
{\partial _t}\rho (z,t) + v{\partial _z}\left( {\rho (z,t)} \right) = 0,z \in [0,1],t > 0
\end{equation}
In reality, the production system velocity depends not only on the WIP, but also on the re-entrant factor $\alpha$ defined as the ratio of the product processing time of re-entrant steps and the total processing time, i.e. $\alpha=\frac{P_1}{P_{1}+P_{2}}=\frac{P_1}{P_{tol}}$. Here, $P_1$ is the re-entrant processing time and $P_2$ is the non-re-entrant processing time. $P_{tol}$ denotes the total processing time. In the non-re-entrant process, let $m$ denote the number of total workstations, then the improved velocity is given by:
\begin{equation}\label{newv-2}
v = \frac{{{v_{\max }}}}{{1 + \left( {{\alpha ^2} + {{\left( {1 - \alpha } \right)}^2}/m} \right)L(t)}}=\Phi (L(t))
\end{equation}
In the following, we start to do the linearization of the model (\ref{pl-2}), (\ref{pl-3}) and (\ref{newpl-1}). To this end, let
\[\begin{array}{l}
\tilde \rho (z,t): = \rho (z,t) - \bar \rho ,\tilde L(t): = L(t) - \bar \rho ,{{\tilde \rho }_0}(z): = {\rho _0}(z) - \bar \rho ,\\
\tilde v(t): = v\left( {\bar \rho  + \tilde L(t)} \right),\tilde u(t): = \tilde v(t)\tilde \rho (0,t),\tilde y(t): = \tilde v(t)\tilde \rho (1,t).
\end{array}\]
Then, the system can be rewritten as follows:
\[\begin{array}{l}
{\partial _t}\tilde \rho (z,t) + \tilde v(t){\partial _z}\tilde \rho (z,t) = 0,z \in (0,1),t > 0\\
\tilde \rho (0,t) = {{\tilde \rho }_0}(z),z \in (0,1),\\
\tilde u(t) = \tilde v(t)\tilde \rho (0,t),\\
\tilde y(t) = \tilde v(t)\tilde \rho (1,t).
\end{array}\]
Taking Taylor expansion of $\tilde v(t)$ at $L(t)=\bar \rho$ gives: $v\left( {\rho (z,t)} \right) = \Phi (L(t)) = \Phi (\bar \rho ) + \Phi '(\bar \rho )\left( {L(t) - \bar \rho } \right) + {\rm O}\left( {{{(L(t) - \bar \rho )}^2}} \right)$.
If we use the first term of taylor expansion of $v(\rho)$ to approximate itself, the linearized model around $\bar \rho$ is then given by:
\begin{equation}\label{L-pl}
\begin{array}{l}
{\partial _t}\tilde \rho (z,t) + v(\bar \rho ){\partial _z}\tilde \rho (z,t) = 0,z \in (0,1),t > 0\\
\tilde \rho (0,t) = {{\tilde \rho }_0}(z),z \in (0,1),\\
\tilde u(t) = v(\bar \rho )\tilde \rho (0,t),\\
\tilde y(t) = v(\bar \rho )\tilde \rho (1,t).
\end{array}
\end{equation}
In order to avoid huge error caused by the linearization, we add the following model improvement strategy:\vspace{-4mm}
\begin{figure}[H]
  \centering
  \includegraphics[width=2.5in]{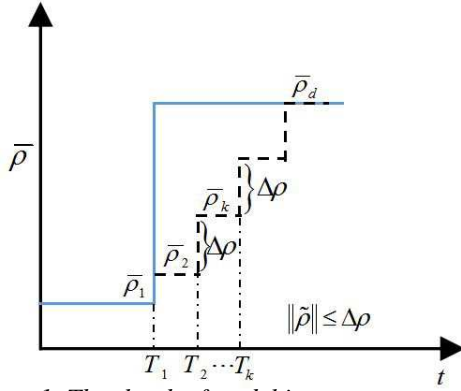}\vspace{-5mm}\\
  \caption{The sketch of model improvement strategy.}\label{fig-2}\vspace{-5mm}
\end{figure}
As shown in Figure \ref{fig-2}, when the average density reaches to $\bar \rho_d$ from $\bar \rho_1$, we can divide the difference between $\bar \rho_d$ and $\bar \rho_1$ into number $d$ intervals. Then, we can chose $v(\bar \rho_1)$ as the velocity from $T_1$ to $T_2$ and chose $v(\bar \rho_k)$ from $T_{k-1}$ to $T_k$, i.e. after the density reaches to $\bar \rho_{k-1}$. Therefore, we rewrite the model (\ref{L-pl}) as:
\begin{equation}\label{new-PL}
\begin{array}{*{20}{l}}
{\partial _t}\tilde \rho (z,t) + v({{\bar \rho }_k}){\partial _z}\tilde \rho (z,t) = 0,z \in (0,1),t > 0\\
\tilde \rho (0,t) = {{\tilde \rho }_0}(z),z \in (0,1),\\
\tilde u(t) = v({{\bar \rho }_k})\tilde \rho (0,t),\\
\tilde y(t) = v({{\bar \rho }_k})\tilde \rho (1,t).
\end{array}
\end{equation}
with $k=1,\cdots,d$ and $d = \left( {{{\bar \rho }_d} - {{\bar \rho }_1}} \right)/\Delta \rho  + 1$.
The final and vital target in manufacturing is the controlling the production rate of production systems. If we produce too much of an item, holding cost/stocktaking occurs and while producing to little of an item will result in lost in sales. In order to increase profitability, it is important for a production system to match its projected demand optimally. Although demand may be stochastic, there are numerous ways to generates the demand forecast for the next day, week, month, etc. Accordingly, the objectives in this work are as follows:
 \begin{enumerate}[leftmargin=1cm]
\item [\textbf{(i)}.] Since the system (\ref{new-PL}) is a boundary controlled hyperbolic PDE system, the first important mission is to guarantee the closed-loop system stability. In this work, a boundary optimal state feedback regulator is designed to achieve the optimal stability.
\item [\textbf{(ii)}.] In order to match the projected demand $d_r$, the boundary tracking regulator has to be designed. In other words, we have to realize:
    \begin{equation}\label{target}
    \mathop {\lim }\limits_{t \to  + \infty } e(t) = \mathop {\lim }\limits_{t \to  + \infty } \left( {y(t) - {d_r}} \right) = 0
    \end{equation}
   Without loss of generality, in Figure \ref{fig-2}, we can use $\bar \rho_d$ to express $d_r$ as: $dr= v(\bar \rho_d)\bar \rho_d$.
   \indent To achieve (\ref{target}), we need to design boundary controllers at every $k$th step, for $k=1,\cdots, d$. Since we divide the difference $\bar \rho_d-\bar \rho_1$ into same intervals, i.e. $\Delta \rho$, we just need to control the linearized system (\ref{new-PL}) at every $k$th step such that the density $\tilde \rho$ achieves to $\Delta \rho$. Finally, if we add all resulting control signals from every $k$th step to the original nonlinear model (\ref{pl-1})--(\ref{pl-3}), the density $\rho$ in (\ref{pl-1})--(\ref{pl-3}) can reach $\bar \rho_d$. In this work, we employ the idea from IMC to achieve the boundary tracking control, i.e. $\mathop {\lim }\limits_{t \to  + \infty } {\tilde e_\rho }(t) = \mathop {\lim }\limits_{t \to  + \infty } \left( {\tilde \rho (1,t) - \Delta \rho } \right) = 0$ and thus $\mathop {\lim }\limits_{t \to  + \infty } \tilde e(t) = \mathop {\lim }\limits_{t \to  + \infty } \left( {\tilde y(t) - v({{\bar \rho }_k})\Delta \rho } \right) = 0$.
\end{enumerate}\vspace{-5mm}

\section*{Optimal Controller Design}
For the linearized model (\ref{new-PL}), we first define a cost functional $J$ by
\begin{equation}\label{cost-1}
\begin{array}{l}
J(\tilde \rho ,\tilde u): = \frac{1}{2}\int_0^T {\left[ {\left\langle {\tilde \rho (z,t),{q_1}(\tilde \rho (z,t))} \right\rangle  + R{{\tilde u}^2}(t)} \right]dt} \\
\hspace{15mm} + \frac{1}{2}\left\langle {\tilde \rho (z,T),{P_f}\left( {\tilde \rho (z,T)} \right)} \right\rangle
\end{array}
\end{equation}
Here, the symbols $q_1\ge 0$, $R> 0$, and $P_{f}\ge 0$ are weighting kernels for states, input and terminal states of the closed-loop system. In particular, the positivity of $R$ is used to guarantee the boundedness of control signals.

\subsection*{Open-loop Controller}
We minimize the cost functional $J$ subject to the constraints introduced by PDE-dynamics, i.e. the following optimization:
\begin{equation}\label{min-1}
\min J(\tilde \rho ,\tilde u){\text{ subject to}}
\end{equation}
\begin{equation}\label{min-2}
{\partial _t}\tilde \rho (z,t) + v({{\bar \rho }_k}){\partial _z}\tilde \rho (z,t) = 0
\end{equation}
\begin{equation}\label{min-3}
\tilde u(t) = v({{\bar \rho }_k})\tilde \rho (0,t)
\end{equation}
\begin{equation}\label{min-4}
\tilde \rho (z,0) = {{\tilde \rho }_0}(z)
\end{equation}
\begin{equation}\label{min-5}
v({{\bar \rho }_k}) = \frac{{{v_{\max }}}}{{1 + \left( {{\alpha ^2} + {{(1 - \alpha )}^2}/m} \right){{\bar \rho }_k}}}
\end{equation}
The following theorem provides necessary conditions such that the above constraint minimization problem can be solved and the open-loop control problem of (\ref{new-PL}) in finite-time horizon can be addressed.
\begin{theorem}\label{theorem-1}
Consider the linear hyperbolic PDEs given by (\ref{new-PL}) defined on the finite-time horizon $t\in [0,T]$ and the cost function (\ref{cost-1}). If we define the nominal states, control and co-states that minimize the cost function as: $\tilde \rho^*(z,t)$, $\tilde u^*(t)$, and $\lambda(t)$, then the necessary conditions for optimality are as follows:
\begin{equation}\label{theo1-1}
{\partial _t}{{\tilde \rho }^*}(z,t) + v({{\bar \rho }_k}){\partial _z}{{\tilde \rho }^*}(z,t) = 0
\end{equation}
\begin{equation}\label{theo1-2}
{\partial _t}\lambda  + v({{\bar \rho }_k}){\partial _z}\lambda  + {q_1}({{\tilde \rho }^*}) = 0
\end{equation}
with boundary conditions:
\begin{equation}\label{theo1-B1}
{{\tilde u}^*}(t) = v({{\bar \rho }_k}){{\tilde \rho }^*}(0,t),\lambda (1,t) = 0
\end{equation}
and initial/terminal conditions:
\begin{equation}\label{theo1-I1}
{{\tilde \rho }^*}(z,0) = {{\tilde \rho }_0}(z),\lambda (z,T) = {P_f}\left( {{{\tilde \rho }^*}(z,T)} \right)
\end{equation}
where the optimal control input is:
\begin{equation}\label{theo1-OI}
{{\tilde u}^*} =  - \frac{1}{R}\lambda (0,t)
\end{equation}
\end{theorem}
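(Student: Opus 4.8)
The plan is to treat \eqref{min-1}--\eqref{min-5} as a constrained calculus-of-variations problem and to obtain the stated first-order conditions by the weak-variation (Pontryagin-type) argument of \cite{athans1966optimal}. First I would adjoin the transport dynamics \eqref{min-2} to the cost \eqref{cost-1} with a co-state field $\lambda(z,t)$, forming the augmented functional
$\bar J = J(\tilde\rho,\tilde u) - \int_0^T \langle \lambda(z,t),\, \partial_t\tilde\rho(z,t) + v(\bar\rho_k)\partial_z\tilde\rho(z,t)\rangle\, dt$,
where the algebraic relation \eqref{min-3} is used to express the inflow trace as $\tilde\rho(0,t) = \tilde u(t)/v(\bar\rho_k)$ and the initial datum \eqref{min-4} fixes $\tilde\rho(z,0)$. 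Here $v(\bar\rho_k)$ is a positive constant on the $k$th step, so all integrations by parts below are clean; and I would assume the weighting operators $q_1$ and $P_f$ are self-adjoint so that the variations of the quadratic terms simplify to $\langle \delta\rho, q_1(\tilde\rho^*)\rangle$ and $\langle \delta\rho(\cdot,T), P_f(\tilde\rho^*(\cdot,T))\rangle$.

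Next I would compute the first variation $\delta\bar J$ about a candidate optimum $(\tilde\rho^*,\tilde u^*,\lambda)$, perturbing $\tilde\rho^*\mapsto \tilde\rho^*+\varepsilon\,\delta\rho$ and $\tilde u^*\mapsto \tilde u^*+\varepsilon\,\delta u$, with $\delta\rho(z,0)=0$ (initial data fixed) and $\delta\rho(0,t)=\delta u(t)/v(\bar\rho_k)$ (forced by \eqref{min-3}), while $\delta\rho(1,t)$ and $\delta\rho(z,T)$ stay free. Integration by parts in $t$ moves $\partial_t$ off $\delta\rho$ onto $\lambda$ and leaves the boundary term $\int_0^1 \lambda(z,T)\delta\rho(z,T)\,dz$ (the $t=0$ term drops since $\delta\rho(z,0)=0$); integration by parts in $z$ moves $\partial_z$ and leaves boundary terms at $z=0$ and $z=1$. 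Requiring $\delta\bar J=0$ for all admissible variations then yields, term by term: the interior coefficient of $\delta\rho$ gives the co-state PDE \eqref{theo1-2}; the coefficient of the free terminal variation $\delta\rho(z,T)$, combined with the terminal penalty, gives the transversality condition $\lambda(z,T)=P_f(\tilde\rho^*(z,T))$ in \eqref{theo1-I1}; the coefficient of the free outflow variation $\delta\rho(1,t)$ gives $\lambda(1,t)=0$ in \eqref{theo1-B1}; and the residual $z=0$ boundary term, after substituting $\delta\rho(0,t)=\delta u/v(\bar\rho_k)$ and combining with $\int_0^T R\tilde u^*\delta u\,dt$, produces $\int_0^T [R\tilde u^*(t)+\lambda(0,t)]\,\delta u(t)\,dt$, whose vanishing for arbitrary $\delta u$ gives \eqref{theo1-OI}. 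The remaining conditions \eqref{theo1-1} and the influx relation in \eqref{theo1-B1} are just stationarity in $\lambda$, i.e. the original constraints \eqref{min-2}--\eqref{min-3}, together with the initial condition in \eqref{theo1-I1}.

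The main obstacle I expect is the careful bookkeeping of the spatial and temporal boundary terms together with the sign conventions: because the first-order transport operator has positive speed $v(\bar\rho_k)>0$, data are imposed only at the inflow boundary $z=0$, so $\delta\rho(0,t)$ is not independent but slaved to $\delta u$ through \eqref{min-3}, whereas $\delta\rho(1,t)$ at the outflow boundary and $\delta\rho(z,T)$ at the final time are genuinely free — getting these roles straight is what distinguishes \eqref{theo1-B1}--\eqref{theo1-OI} from the (incorrect) alternative of imposing conditions on $\lambda(0,t)$ or on $\tilde\rho(1,t)$. A secondary technical point is justifying the weak-variation step itself, i.e. restricting to perturbations $\delta u$ keeping the perturbed state in the appropriate solution class so that $J$ and $\bar J$ are Gâteaux differentiable; this is exactly where the framework of \cite{athans1966optimal} is invoked, and the hypothesis $R>0$ then also ensures \eqref{theo1-OI} defines a bounded optimal input.
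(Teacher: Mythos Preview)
Your proposal is correct and follows exactly the approach the paper itself advertises --- the weak-variation argument of \cite{athans1966optimal} --- but note that the paper actually \emph{omits} its proof of Theorem~\ref{theorem-1} (``Proof: omitted due to page limit''), so there is nothing to compare against beyond that citation. Your bookkeeping of the boundary terms is right: the $z=0$ term combines with the $R\tilde u^*\delta u$ term to give \eqref{theo1-OI}, the free $z=1$ and $t=T$ variations give $\lambda(1,t)=0$ and the transversality condition, and the interior term gives the adjoint PDE \eqref{theo1-2}.
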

Proof: omitted due to page limit.

\subsection*{State-feedback Controller}
Now, we are considering the state-feedback controller design problem. First, we define the following linear transformation that relates the co-state $\lambda$ to the state $\tilde \rho$:
\begin{equation}\label{Co-stat}
\lambda (z,t) = \int_0^1 {P(z,y,t)} {{\tilde \rho }^*}(y,t)dy
\end{equation}
Moreover, the terms in previous section are denoted by:
\[\begin{array}{l}
{q_1}\left( {\tilde \rho^*(z,t)} \right) = \int_0^1 {{q_1}(z,y)\tilde \rho^*(y,t)dy} \\
{P_{{f}}}\left( {\tilde \rho^*(z,T)} \right) = \int_0^1 {{P_{{f}}}(z,y)\tilde \rho^*(y,T)dy}
\end{array}\]
Then, we have the following result for the boundary controlled linear coupled hyperbolic PDE systems.
\begin{theorem}\label{theorem-2}
The optimal boundary control in state-feedback form is given by:
\begin{equation}\label{theo2-1}
{{\tilde u}^*} =  - \frac{1}{R}\int_0^1 {P(0,y,t){{\tilde \rho }^*}(y,t)dy}
\end{equation}
where the spatial varying transformation kernel $P(z,y)$ is the solution of the following differential Riccati equations:
\begin{equation}\label{theo2-r1}
\begin{array}{l}
{\partial _t}P(z,y,t) +  v({{\bar \rho }_k}){\partial _z}P(z,y,t) + v({{\bar \rho }_k}){\partial _y}P(z,y,t)\\
 + {q_1}(z,y) - \frac{1}{R}P(z,0,t)P(0,y,t) = 0
\end{array}
\end{equation}
with boundary conditions:
\begin{equation}\label{theo2-rb1}
P(z,1,t) = 0,P(1,y,t) = 0
\end{equation}
and terminal condition:
\begin{equation}\label{theo2-ri2}
P(z,y,T) = P_f(z,y)
\end{equation}
\end{theorem}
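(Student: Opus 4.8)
\noindent\emph{Plan of proof.} The plan is to show that the feedback law (\ref{theo2-1}) together with the operator Riccati system (\ref{theo2-r1})--(\ref{theo2-ri2}) is equivalent to the open--loop optimality conditions of Theorem \ref{theorem-1}, by substituting the ansatz (\ref{Co-stat}) into the co--state dynamics and matching integral kernels. First I would differentiate $\lambda(z,t)=\int_0^1 P(z,y,t)\,{\tilde\rho}^*(y,t)\,dy$ in time, obtaining $\partial_t\lambda = \int_0^1 \partial_t P\,{\tilde\rho}^*\,dy + \int_0^1 P\,\partial_t{\tilde\rho}^*\,dy$. Into the second integral I insert the transport equation (\ref{theo1-1}) written as $\partial_t{\tilde\rho}^*(y,t) = -v({\bar\rho}_k)\,\partial_y{\tilde\rho}^*(y,t)$ and integrate by parts in $y$; this transfers the spatial derivative onto the kernel (producing the $v({\bar\rho}_k)\partial_y P$ term) and leaves the boundary contribution $-v({\bar\rho}_k)\big[P(z,1,t){\tilde\rho}^*(1,t) - P(z,0,t){\tilde\rho}^*(0,t)\big]$.

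Next I would dispose of the two boundary evaluations. The value ${\tilde\rho}^*(1,t)$ is the (uncontrolled) output value and varies independently along optimal trajectories, so its coefficient must vanish, which yields $P(z,1,t)=0$ in (\ref{theo2-rb1}). The term at $y=0$ is rewritten by using the actuation relation ${\tilde\rho}^*(0,t)={\tilde u}^*(t)/v({\bar\rho}_k)$ from (\ref{theo1-B1}) together with the postulated feedback ${\tilde u}^*(t)=-\tfrac1R\lambda(0,t)=-\tfrac1R\int_0^1 P(0,y,t){\tilde\rho}^*(y,t)\,dy$; this converts $v({\bar\rho}_k)P(z,0,t){\tilde\rho}^*(0,t)$ into the quadratic term $-\tfrac1R\int_0^1 P(z,0,t)P(0,y,t){\tilde\rho}^*(y,t)\,dy$. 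On the other hand I compute $\partial_t\lambda$ directly from the co--state equation (\ref{theo1-2}), $\partial_t\lambda = -v({\bar\rho}_k)\partial_z\lambda - q_1({\tilde\rho}^*)$, and express $\partial_z\lambda$ and $q_1({\tilde\rho}^*)$ through their kernels $\partial_z P$ and $q_1(z,y)$. Equating the two representations of $\partial_t\lambda$ produces an identity $\int_0^1 \mathcal{K}(z,y,t){\tilde\rho}^*(y,t)\,dy = 0$ holding along every optimal trajectory; since the states ${\tilde\rho}^*(\cdot,t)$ span a sufficiently rich family as ${\tilde\rho}_0$ varies, $\mathcal{K}\equiv 0$, which is exactly the Riccati equation (\ref{theo2-r1}). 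The terminal condition (\ref{theo2-ri2}) and the remaining boundary condition $P(1,y,t)=0$ then follow by kernel matching in $\lambda(z,T)=P_f({\tilde\rho}^*(z,T))$ and in $\lambda(1,t)=0$, respectively, and (\ref{theo2-1}) is just the feedback law evaluated at $z=0$.

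To finish I would note the converse direction: if $P$ solves (\ref{theo2-r1})--(\ref{theo2-ri2}), then $\lambda$ defined by (\ref{Co-stat}) satisfies (\ref{theo1-2})--(\ref{theo1-I1}) and the input (\ref{theo1-OI}) coincides with (\ref{theo2-1}), so the feedback realization is valid. The main obstacle I expect is the careful treatment of the boundary evaluations arising in the integration by parts — specifically, justifying that the coefficient of the free value ${\tilde\rho}^*(1,t)$ can be forced to zero, and that the resulting integral identity actually forces the integrand $\mathcal{K}$ to vanish pointwise, which requires a density/reachability argument about the set of admissible states. Existence, uniqueness and symmetry of the solution $P$ of the quadratic operator Riccati equation would deserve a separate remark, but for deriving the state--feedback form it suffices to assume such a $P$ exists.
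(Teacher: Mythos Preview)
Your proposal is correct and follows essentially the same route as the paper's proof: substitute the ansatz (\ref{Co-stat}) into the co--state equation (\ref{theo1-2}), integrate by parts in $y$, and match kernels, with $P(1,y,t)=0$ read off from $\lambda(1,t)=0$ in (\ref{theo1-B1}) and $P(z,1,t)=0$ forced by the boundary term at $y=1$ from the integration by parts. The paper's own proof is only a brief sketch of exactly these steps, so your write--up is in fact more complete than what appears there.
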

\begin{proof}
To proof this theorem, one just needs to evaluate $\lambda$ in (\ref{theo1-2}), (\ref{theo1-B1}) and (\ref{theo1-I1}) using the linear transformation (\ref{Co-stat}). One boundary condition for $P(1,y,t)$ is directly resulted from the boundary conditions in (\ref{theo1-B1}) and the other boundary condition for $P(z,1,t)$ arises from integration by parts.
\end{proof}
Consequently, for the infinite-time horizon, the steady-state solution of (\ref{theo2-r1})--(\ref{theo2-ri2}) is given by:
\[\begin{array}{l}
 v({{\bar \rho }_k}){\partial _z}{P_{ss}}(z,y) + v({{\bar \rho }_k}){\partial _y}{P_{ss}}(z,y)\\
 + {q_1}(z,y) - \frac{1}{R}{P_{ss}}(z,0){P_{ss}}(0,y) = 0\\
{P_{ss}}(z,1) = 0,{P_{ss}}(1,y) = 0
\end{array}\]
and the time-invariant steady-feedback control law is given by:
$${{\tilde u}^*} =  - \frac{1}{R}\int_0^1 {P_{ss}(0,y){{\tilde \rho }^*}(y,t)dy}$$

\section*{Demand Tracking}
In this section, we will apply the technique motivated by IMC to reduce the mismatch between the outflux and a demand rate target. Once the demand rates are predicted, we are able to find out the value of $v(\bar \rho_k)\Delta \rho$ according to different values of number $d$ given in Figure \ref{fig-2}. Therefore, we can use an finite-dimensional exosystem to generate the signal $v(\bar \rho_k)\Delta \rho$ for the controller of the production system. The exosystem can be defined by:
\begin{equation}\label{exo-1}
\dot w(t) = Sw(t),w(0) \in {\mathbb C^n}
\end{equation}
\begin{equation}\label{exo-2}
v(\bar \rho_k)\Delta \rho = q_rw(t),t\ge 0
\end{equation}
with $q_r$ matrix of appropriate dimensions which is assumed to be known for the regulator design.
\begin{assumption}\label{asump-1}
$S:D(S)\subset \mathbb{C}^n\to \mathbb{C}^n$ is a diagonal or diagonalizable matrix having all its eigenvalues on the imaginary axis, i.e. $\sigma(S)=(\lambda_k)_{k=1,...,n}$. In particular, one of $\lambda_k$ can be chosen as zero. Moreover, we assume that $(\phi_{k})_{k=1,...,n}$ are eigenvectors of $S$ and form an orthonormal basis of $\mathbb{C}^n$. This allows the modeling of steplike and sinusoidal exogenous signals.
\end{assumption}
To solve the demand rate tracking problem, the optimal state feedback controller with a feedforward of the signal model states is considered:
\begin{equation}\label{vt-cotr}
\tilde u(t) =  - \frac{1}{R}\int_0^1 {P_{ss}(0,y)\tilde \rho (y,t)dy}  + v({{\bar \rho }_k})m_w^Tw(t)
\end{equation}
The feedback gain $P(0,y)$ is a solution of differential equations in Theorem \ref{theorem-2} and the feedforward gain $m_v^T$ has to be determined. We have the following result which provides a choice of $m_v^T$.
\begin{theorem}\label{theorem-3}
The feedforward gain for the signal model states has the following form:
\begin{equation}\label{fw-gain}
m_w^T = \frac{1}{{v({{\bar \rho }_k})R}}\int_0^1 {P_{ss}(0,y){m^T}(y)dy + } {m^T}(0)
\end{equation}
such that the tracking contorl can be achieved, where the spatial varying vector $m^T(z)$ is the solution of the following regulator equations:
\begin{equation}\label{theo3-1}
v({{\bar \rho }_k}){d_z}{m^T}(z) + {m^T}(z)S = 0
\end{equation}
with boundary conditions:
\begin{equation}\label{theo3-3}
{m^T}(1) = \frac{1}{{v(\bar \rho_k )}}q_r^T
\end{equation}
\end{theorem}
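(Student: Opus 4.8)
The plan is to treat this as a classical output-regulation problem and build the regulator from an internal-model-type steady-state profile. First I would postulate that the closed-loop state relaxes onto a profile linear in the exosystem state, $\tilde\rho_{\mathrm{reg}}(z,t) := m^T(z)w(t)$, and determine the equations $m^T(\cdot)$ must obey. Substituting this ansatz into the transport dynamics (\ref{min-2}) and using $\dot w = Sw$ gives $m^T(z)S\,w(t) + v(\bar\rho_k)\,\tfrac{d}{dz}m^T(z)\,w(t) = 0$; since $w(0)\in\mathbb{C}^n$ is arbitrary this forces the matrix ODE (\ref{theo3-1}). Requiring the regulated output to match the reference, i.e. $v(\bar\rho_k)\tilde\rho_{\mathrm{reg}}(1,t) = v(\bar\rho_k)m^T(1)w(t) = q_r w(t)$ for all $w$, yields the boundary condition (\ref{theo3-3}). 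Under Assumption \ref{asump-1} the linear ODE (\ref{theo3-1})--(\ref{theo3-3}) has a unique (matrix-exponential) solution, so $m^T(\cdot)$ is well defined.

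Next I would pin down the feedforward gain. Evaluating the input relation $\tilde u(t) = v(\bar\rho_k)\tilde\rho(0,t)$ along the regulator profile gives $\tilde u_{\mathrm{reg}}(t) = v(\bar\rho_k)m^T(0)w(t)$, whereas substituting $\tilde\rho = \tilde\rho_{\mathrm{reg}}$ into the controller (\ref{vt-cotr}) gives $\tilde u_{\mathrm{reg}}(t) = -\tfrac1R\int_0^1 P_{ss}(0,y)m^T(y)\,dy\,w(t) + v(\bar\rho_k)m_w^T w(t)$. Equating the two and cancelling the arbitrary $w(t)$ produces exactly (\ref{fw-gain}); that is, (\ref{fw-gain}) is precisely the compatibility condition that makes $\tilde\rho_{\mathrm{reg}}$ an invariant solution of the closed-loop system.

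The heart of the argument is the convergence step. Introduce the error coordinate $\varepsilon(z,t) := \tilde\rho(z,t) - m^T(z)w(t)$. Differentiating in time and using both $\dot w = Sw$ and the regulator equation (\ref{theo3-1}) shows $\varepsilon$ satisfies the same homogeneous transport equation $\partial_t\varepsilon + v(\bar\rho_k)\partial_z\varepsilon = 0$. For the boundary behaviour, substitute (\ref{vt-cotr}) into $\tilde u = v(\bar\rho_k)\tilde\rho(0,t)$ and subtract $v(\bar\rho_k)m^T(0)w(t)$ from both sides; invoking the feedforward identity (\ref{fw-gain}) all $w$-dependent terms cancel, leaving $v(\bar\rho_k)\varepsilon(0,t) = -\tfrac1R\int_0^1 P_{ss}(0,y)\varepsilon(y,t)\,dy$. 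Hence $\varepsilon$ obeys exactly the homogeneous closed-loop dynamics of the infinite-horizon optimal state-feedback regulator of the previous section. Invoking its (exponential) stability yields $\varepsilon(z,t)\to 0$ as $t\to\infty$, so $\tilde\rho(z,t)\to m^T(z)w(t)$; consequently $\tilde e(t) = \tilde y(t) - v(\bar\rho_k)\Delta\rho = v(\bar\rho_k)\varepsilon(1,t) + v(\bar\rho_k)m^T(1)w(t) - q_r w(t)$, which by (\ref{theo3-3}) and (\ref{exo-2}) reduces to $v(\bar\rho_k)\varepsilon(1,t)\to 0$, the claimed tracking result.

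The step I expect to be the main obstacle is this last one: rigorously establishing $\varepsilon(\cdot,t)\to 0$. This requires (i) existence and boundedness of the steady-state Riccati kernel $P_{ss}$ solving the equations following Theorem \ref{theorem-2}, and (ii) exponential stability of the transport semigroup closed by the nonlocal feedback $v(\bar\rho_k)\varepsilon(0,t) = -\tfrac1R\int_0^1 P_{ss}(0,y)\varepsilon(y,t)\,dy$ --- which one would argue either from the finite propagation speed of $v(\bar\rho_k)\partial_z$ together with a small-gain/spectral estimate, or directly from the optimality of the LQ design in Theorem \ref{theorem-2}. A secondary technical point is the admissibility of the boundary traces $\tilde\rho(0,t)$, $\tilde\rho(1,t)$, $\varepsilon(0,t)$ in the relevant solution class (hidden regularity of the transport semigroup) and the well-posedness of (\ref{vt-cotr}) as a closed loop, since $\tilde\rho(0,t)$ appears on both sides; both are standard for scalar first-order hyperbolic systems. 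The remaining steps are linear bookkeeping with the transformation (\ref{Co-stat}).
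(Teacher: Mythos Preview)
Your proposal is correct and follows essentially the same route as the paper: introduce the error state $\varepsilon(z,t)=\tilde\rho(z,t)-m^T(z)w(t)$, derive the regulator equation (\ref{theo3-1}), the boundary condition (\ref{theo3-3}), and the feedforward identity (\ref{fw-gain}) as exactly the conditions under which $\varepsilon$ inherits the homogeneous optimally stabilized closed-loop dynamics, and then invoke the stability of that loop to conclude $\tilde e(t)=v(\bar\rho_k)\varepsilon(1,t)\to 0$. Your additional caveats about existence of $P_{ss}$, exponential stability of the nonlocal feedback loop, and trace regularity are points the paper simply assumes rather than proves, so you are if anything more careful than the original.
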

\begin{proof}
In order to determine the feedforward gain $m_v^T$, we introduce for (\ref{new-PL}) and (\ref{exo-1}) error states:
\begin{equation}\label{err-sta}
e(z,t) = \tilde \rho (z,t) - {m^T}(z)w(t)
\end{equation}
where $m^T(z)$ has to be found. By applying (\ref{new-PL}), (\ref{exo-1}) and (\ref{err-sta}), one obtains:
\begin{equation}\label{st-er-1}
{\partial _t}e(z,t) + v({{\bar \rho }_k}){\partial _z}e(z,t) = 0
\end{equation}
if $m^T(z)$ satisfies the following condition:
\begin{equation}\label{m-cd1}
v({{\bar \rho }_k}){d_z}{m^T}(z) + {m^T}(z)S = 0
\end{equation}
The boundary condition for $e(z,t)$ is given by:
\begin{equation}\label{st-er-B}
v({{\bar \rho }_k})e(0,t) =  - \frac{1}{R}\int_0^1 {P_{ss}(0,y)e(y,t)dy}
\end{equation}
if $m^T(0)$ satisfies the following condition:
\[m_w^T = \frac{1}{{v({{\bar \rho }_k})R}}\int_0^1 {P_{ss}(0,y){m^T}(y)dy + } {m^T}(0)\]
Finally, the tracking error $\tilde e(t)$ becomes:
\begin{equation}\label{track-error}
\tilde e(t) = v({{\bar \rho }_k})e(1,t)
\end{equation}
if the following condition holds:
\[{m^T}(1) = \frac{1}{{v({{\bar \rho }_k})}}q_r^T\]
According to the illustration in previous sections, the error system (\ref{st-er-1}) with (\ref{st-er-B}) is stable optimally and therefore the tracking error $e(t)$ in (\ref{track-error}) decays to zero optimally, which proves the tracking control is achieved. Moreover, summarizing equations with respect to $m^T(z)$ yields the conclusion of the theorem. This concludes the proof.
\end{proof}
\begin{remark}\label{remark-1}
Rewriting the control law in (\ref{vt-cotr}) gives:
\begin{equation}\label{new-cot}
\begin{array}{l}
\tilde u(t) =  - \frac{1}{R}\int_0^1 {P_{ss}(0,y)\tilde \rho (y,t)dy} \\
\hspace{10mm} + \left( {\frac{1}{R}\int_0^1 {P(0,y){m^T}(y)dy + } v({{\bar \rho }_k}){m^T}(0)} \right)w(t)
\end{array}
\end{equation}
The exosystem is constructed according to the predicted demand rate, the state of exosystem $w(t)$ is easily obtained and is known to the controller (\ref{new-cot}). However, the full state $\tilde \rho(z,t)$ is not easily obtained. In this case, we can apply the control law:
\begin{equation}\label{new-cot-2}
\begin{array}{l}
\tilde u(t) =  - \frac{1}{R}\lambda(0,t) \\
\hspace{10mm} + \left( {\frac{1}{R}\int_0^1 {P_{ss}(0,y){m^T}(y)dy + } v({{\bar \rho }_k}){m^T}(0)} \right)w(t)
\end{array}
\end{equation}

\end{remark}\vspace{-9mm}
\section*{Numerical Simulation}
In this section, we will use a mini-fab model shown in Figure \ref{fig-3} to study the proposed approach. The mini-fab model is chosen since it has all important characteristics of the re-entrant manufacturing systems such as batch production, re-entrant and in particular different processing time.
\begin{figure}[ht]
  \centering
  \includegraphics[width=3in]{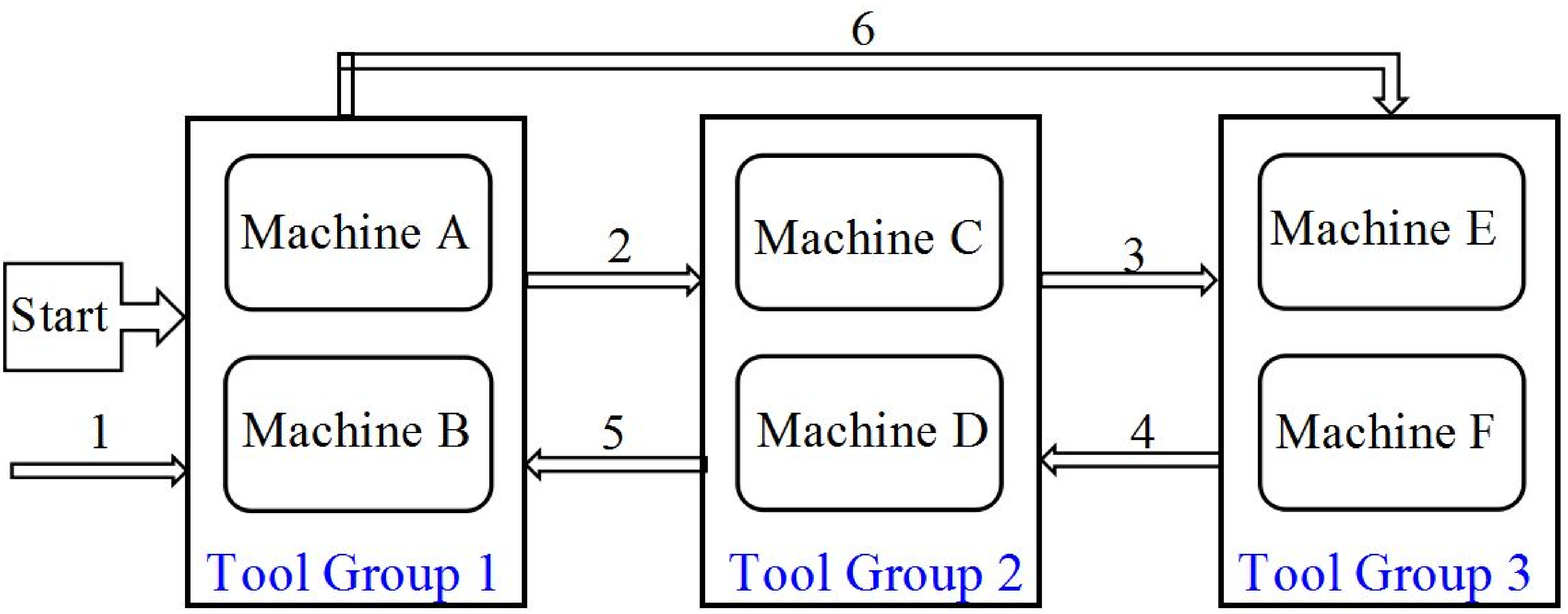}\vspace{-4mm}\\
  \caption{Process flow diagram of the mini-fab.}\label{fig-3}
\end{figure}
Now, we assume that there is a product in the mini-fab model and in Table \ref{table-1} processing steps and processing time are given.
\begin{table}[ht]
\caption{Processing time of product at each step.\vspace{-4mm}} 
\centering 
\begin{tabular}{c c c c c c} 
\hline\hline 
\hspace{-3mm}Machining centres&Processing time (hours) \\ 
\hline
\hspace{-6mm}Machine A $\&$ B&Step 1: 1.5&Step 5: 1.5\\ %
\hspace{-6mm}Machine C $\&$ D&Step 2: 0.5&\hspace{-3mm}Step 4: 1\\
\hspace{-7mm}Machine E $\&$ F&\hspace{-3mm}Step 3: 1&Step 6: 0.5\\
\hline 
\end{tabular}
\label{table-1} 
\vspace{-2mm}
\end{table}
According to Figure \ref{fig-3}, $m=3$ and the steps 4-6 are re-entrant steps. Table \ref{table-1}
provides: $P_1=3$ (hours) and $P=6$ (hours), so the re-entrant factor is $\alpha=0.5$ and $v_{\max}=1/P=4$ (units/day). Without loss of generality, we assume that the system stays in steady-state with influx given in (\ref{pl-2}): $u(t)=4$ (units/day). Therefore, the corresponding steady density $\bar \rho_1$ is the solution of the equation: $v({{\bar \rho }_1}){{\bar \rho }_1} = \frac{{{v_{\max }}}}{{1 + ({\alpha ^2} + {{(1 - \alpha )}^2}/m){{\bar \rho }_1}}}{{\bar \rho }_1} = 4$ and therefor ${{\bar \rho }_1} = 1.5$. We assume the demand rate to be tracking is $d_r(t)=\frac{60}{11}$ and thus $\rho_d=2.5$. As shown in Figure \ref{fig-2}, we discuss the performances of the proposed approach by assuming the different values of $d$, e.g. $d=2$ or $d=3$, see Table \ref{table-2}.\vspace{-5mm}
\begin{table}[ht]
\caption{Different values of $d$.\vspace{-4mm}} 
\centering 
\begin{tabular}{c c c c c c} 
\hline\hline 
$d$&$\Delta \rho$&$v(\bar \rho_k)$ \\ 
\hline
$d=2$&$\Delta \rho=1$&$v(\bar \rho_1)=8/3$\\ %
$d=3$&$\Delta \rho=0.5$&$v(\bar \rho_1)=8/3$,$v(\bar \rho_2)=2.4$\\
\hline 
\end{tabular}
\label{table-2} 
\vspace{-2mm}
\end{table}
Based on the conclusion of previous theory sections, we start applying boundary optimal tracking control law to the original nonlinear model (\ref{pl-1})--(\ref{pl-3}) motivated by (\ref{vt-cotr}). According to different division number $d$, we have the following discussion:\\
For the case: $d=3$, we need to solve the following differential Riccati equations and regulator equation:
\[\begin{array}{*{20}{l}}
v({{\bar \rho }_k}){\partial _z}{P_{kss}}(z,y) + v({{\bar \rho }_k}){\partial _y}{P_{kss}}(z,y)\\
 + {q_1}(z,y) - \frac{1}{R}{P_{kss}}(z,0){P_{kss}}(0,y) = 0\\
v({{\bar \rho }_k}){d_z}m_k^T(z) + m_k^T(z)S = 0\\
{P_{kss}}(z,1) = 0,{P_{kss}}(1,y) = 0\\
m_k^T(1) = \frac{1}{{v({{\bar \rho }_k})}}q_r^T,k = 1,2
\end{array}\]
and then the control law is given by:
\begin{equation}\label{cou-1}
u_1(t) = \left\{ \begin{array}{l}
v({{\bar \rho }_1}){{\bar \rho }_1} - \frac{1}{R}\int_0^1 {{P_{1ss}}(0,y)\left( {\rho (y,t) - {{\bar \rho }_1}} \right)dy} \\
 + v({{\bar \rho }_1})m_{1w}^Tw(t),{{\bar \rho }_1} \le \rho (1,t) \le {{\bar \rho }_2}\\
v({{\bar \rho }_2}){{\bar \rho }_2} - \frac{1}{R}\int_0^1 {{P_{2ss}}(0,y)\left( {\rho (y,t) - {{\bar \rho }_2}} \right)dy} \\
 + v({{\bar \rho }_2})m_{2w}^Tw(t),{{\bar \rho }_2} \le \rho (1,t) \le {{\bar \rho }_d}
\end{array} \right.
\end{equation}
with $m_{kw}^T = \frac{1}{{v({{\bar \rho }_k})R}}\int_0^1 {{P_{kss}}(0,y)m_k^T(y)dy}  + m_k^T(0)$, $k=1,2$.\\
While for the case: $d=2$, we need to solve the following equations for the control law in (\ref{new-cot}):\vspace{-2mm}
\[\begin{array}{*{20}{l}}
{v({{\bar \rho }_1}){\partial _z}{P_{1ss}}(z,y) + v({{\bar \rho }_1}){\partial _y}{P_{1ss}}(z,y)}\\
{ + {q_1}(z,y) - \frac{1}{R}{P_{1ss}}(z,0){P_{1ss}}(0,y) = 0}\\
{v({{\bar \rho }_1}){d_z}m_1^T(z) + m_1^T(z)S = 0}\\
{{P_{1ss}}(z,1) = 0,{P_{1ss}}(1,y) = 0}\\
{m_1^T(1) = \frac{1}{{v({{\bar \rho }_1})}}q_r^T}
\end{array}\]
and the control law is given by:
\begin{equation}\label{cou-2}
\begin{array}{l}
{u_2}(t) = v({{\bar \rho }_1}){{\bar \rho }_1} - \frac{1}{R}\int_0^1 {{P_{1ss}}(0,y)\left( {\rho (y,t) - {{\bar \rho }_1}} \right)dy} \\
 + v({{\bar \rho }_1})m_w^Tw(t)
\end{array}
\end{equation}
with $m_w^T = \frac{1}{{v({{\bar \rho }_1})R}}\int_0^1 {{P_{1ss}}(0,y)m_1^T(y)dy}  + m_1^T(0)$.
\begin{figure}[ht]
  \centering
  \includegraphics[width=3.5in]{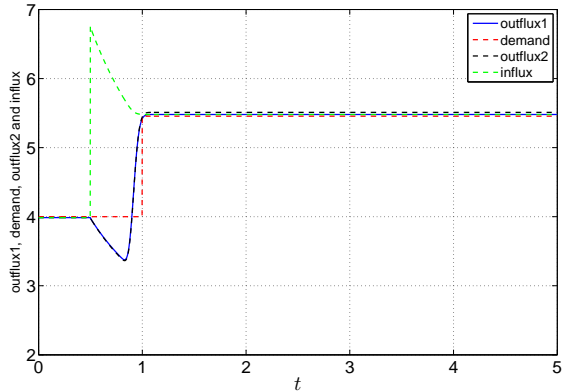}\vspace{-5mm}\\
  \caption{Influx (green dashed line), outflux(solid line and black dashed line), and demand rate (red dashed line) for a step demand function from 4 to 5.4545 at $t=1$.}\label{fig-4}
\end{figure}

From Figure \ref{fig-4}, in order to achieve the desired demand rate, we need to increase the influx (see green dashed line), which causes that the average velocity decreases. Therefore, the outflux will decrease first (see black dashed line and blue solid line) since the density $\rho(1,t)$ does not change but the velocity decreases. Note that in Figure \ref{fig-4}, the blue line denotes the outflux under the control law $u_1(t)$ in (\ref{cou-1}) and the black dashed line presents the outflux under the control law $u_2(t)$ in (\ref{cou-2}). It is shown that under the control $u_1(t)$, the outflux can reach to the desired demand rate more accurately.

\section*{Conclusion}
This work addressed the optimal demand rate tracking problem for the production manufacturing system based on a continuum model. Due to the complexity that the nonlinearity brings, all results of this work are derived based on the linearized production system model. To solve the optimal stabilization of the resulting linear production system model, a weak variation approach is utilized and the corresponding open-loop and state feedback boundary control laws are obtained. Furthermore, based on the resulting state feedback stabilization control law, the boundary tracking regulator is constructed. In particular, the optima stabilization yields optimal reference demand rate tracking and this conclusion can be obtained from the proof part of Theorem \ref{theorem-3}. Finally, the proposed approach is verified through a numerical example where we applied the resulting control law to the original nonlinear manufacturing system and the performance of the tracking controller is shown in Figure \ref{fig-4}.

\bibliographystyle{apalike}
\small
\bibliography{myreference}

\end{document}